\renewcommand{\vec}[1]{\underline{#1}}
\newtheorem{lemma}{Lemma}
\newtheorem{prop}[lemma]{Proposition}
\newtheorem{theorem}[lemma]{Theorem}
\newtheorem{cor}[lemma]{Corollary}
\theoremstyle{remark}
\theoremstyle{definition}
\newcommand{\lge}{\langle}
\newcommand{\rge}{\rangle}
\newcommand{\ax}{\mathcal{A}}
\newcommand{\bx}{\mathcal{B}}
\newcommand{\fx}{\mathcal{F}}
\newcommand{\nx}{\mathcal{N}}
\newcommand{\nz}{\mathbb{N}}
\newcommand{\rz}{\mathbb{R}}
\newcommand{\cz}{\mathbb{C}}
\newcommand{\fz}{\mathbb{F}}
\newcommand{\Ga}{\Gamma}
\newcommand{\Om}{\Omega}
\newcommand{\de}{\delta}
\newcommand{\si}{\sigma}
\newcommand{\eps}{\varepsilon}
\newcommand{\8}{\infty}
\mathchardef\dash="2D
\newcommand{\Dom}{\operatorname{Dom}}
\begin{document}
\title{An application of free transport to mixed $q$-Gaussian algebras}
\date{}
\author[Brent Nelson]{Brent Nelson$^\bullet$}
\address{$\bullet$ Department of Mathematics, University of California, Berkeley, CA 94709}
\email{brent@math.berkeley.edu}
\thanks{$\bullet$ Research supported by the NSF awards DMS-1161411 and DMS-1502822.}
\author[Qiang Zeng]{Qiang Zeng$^\circ$}
\address{$\circ$ Center of Mathematical Sciences and Applications, Harvard University, Cambridge, MA 02138}
\email{qzeng@cmsa.fas.harvard.edu}
\subjclass[2010]{46L54, 81S05}
\maketitle

\begin{abstract}
We consider the mixed $q$-Gaussian algebras introduced by Speicher which are generated by the variables $X_i=l_i+l_i^*,i=1,\ldots,N$, where $l_i^* l_j-q_{ij}l_j l_i^*=\de_{i,j}$ and $-1<q_{ij}=q_{ji}<1$. Using the free monotone transport theorem of Guionnet and Shlyakhtenko, we show that the mixed $q$-Gaussian von Neumann algebras are isomorphic to the free group von Neumann algebra $L(\fz_N)$, provided that $\max_{i,j}|q_{ij}|$ is small enough. The proof relies on some estimates which are generalizations of Dabrowski's results for the special case $q_{ij}\equiv q$.
\end{abstract}

\section{Introduction}
A fundamental problem in the theory of operator algebras is whether two algebras are isomorphic. The operator algebra (both the (reduced) $C^*$-algebra and von Neumann algebra) of the free group $\fz_N$ with $N$ generators has been a central object to study. In particular, the von Neumann algebras of $\fz_N$ are isomorphic to those generated by $N$ free semi-circular variables $(S_i)_{i=1,...,N}$ due to Voiculescu; see \cite{VDN}. Motivated from mathematical physics, Bo\.zejko and Speicher introduced the $q$-Gaussian variables \cite{BS91}, which can be regarded as a deformation of the free semi-circular system. Since then, the $q$-Gaussian algebras have been extensively studied. For an incomplete list of results, see \cites{BKS, Sh04, Nou,Sn04,Ri05,KN11,Av} among others. More recently, using estimates of Dabrowski \cite{Dab}, Guionnet and Shlyakhtenko \cite{GS14} have shown that the $q$-Gaussian von Neumann algebras are isomorphic to those generated from the free semi-circular variables for $|q|$ small enough. This result was proved using the powerful free monotone transport theorem. The first named author \cite{Ne15} adapted this to the non-tracial setting and showed that the finitely generated $q$-deformed free Araki-Woods algebras are isomorphic to the finitely generated free Araki-Woods factor for $|q|$ small enough (\emph{cf.} \cite{Shl97}, \cite{Hia03}). In this paper, we give another application of Guionnet and Shlyakhtenko's theory.

The $q$-Gaussian variables and $q$-commutation relations were further generalized with the motivation from physics. In \cite{Sp93}, Speicher introduced the commutation relation
\begin{equation}\label{qcom}
l_i^* l_j-q_{ij}l_j l_i^*=\de_{i,j}
\end{equation}
where $Q=(q_{ij})_{i,j=1}^N$ is a symmetric matrix with $|q_{ij}|\le 1$, and $\de_{i,j}$ is the Kronecker delta function. It was shown \cites{Sp93,BS94} that \eqref{qcom} can be represented as left creation and annihilation operators on a certain Fock space. We will always use this Fock representation of \eqref{qcom} in this paper. We call the operator algebras generated by $X_i=l_i+l_i^*$ the mixed $q$-Gaussian algebras and call $X_i$'s the mixed $q$-Gaussian variables. In fact, the so-called braid relations (a.k.a.\! Yang--Baxter equation), which are more general than \eqref{qcom}, were also studied by Bo\.zejko, Speicher, Nou, and Kr\.olak in \cite{BS94, Nou, Kr00,Kr05}, among others. As for \eqref{qcom}, Lust-Piquard \cite{LP99} showed the $L^p$ boundedness of the Riesz transforms associated to the number operator of the system. More recently, Junge and the second named author \cite{JZ15} studied various properties of the mixed $q$-Gaussian von Neumann algebras and in particular proved that they have the complete metric approximation property and are strongly solid in the sense of Ozawa and Popa \cite{OP10} as long as $\max_{1\le i,j\le N} |q_{ij}|<1$.

In the present paper, we show that if $\max_{1\le i,j\le N} |q_{ij}|$ is small enough then the mixed $q$-Gaussian algebras are isomorphic to the algebras generated from free semi-circular variables. To state the result precisely, let us denote by $\Ga_q(\rz^N)$ the $q$-Gaussian von Neumann algebra of $N$ generators, $L(\fz_N)$ the von Neumann algebra generated from $\fz_N$, and $C^*(Y_1,...,Y_N)$ the $C^*$-algebra generated by operators $Y_1,...,Y_N$.
\begin{theorem}\label{main}
Let $Q=(q_{ij})$ be a symmetric $N\times N$ matrix with $N\in \{2,3,\ldots\}$ and $q_{ij}\in(-1,1)$. Let $\Ga_Q$ be the von Neumann algebra generated by the mixed $q$-Gaussian variables $X_1,\ldots,X_N$. Then there exists a $q_0=q_0(N)>0$ depending only on $N$ such that $\Ga_Q\cong\Ga_0(\rz^N)\cong L(\fz_N)$ and $C^*(X_1,\ldots ,X_N)\cong C^*(S_1,...,S_N)$ for all $Q$ satisfying $\max_{i,j} |q_{ij}|<q_0$. 
\end{theorem}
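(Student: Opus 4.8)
The plan is to realize the mixed $q$-Gaussian functional as a free Gibbs state whose potential is a small analytic perturbation of the quadratic one, and then to apply the free monotone transport theorem of Guionnet and Shlyakhtenko \cite{GS14}. I would work in the Banach algebras of non-commutative power series in $N$ self-adjoint indeterminates equipped with the analytic ($R$-radius) norms of \cite{GS14}, together with the subspace of cyclically symmetric series used for potentials. The consequence of \cite{GS14} that is needed is the following: there is an $\eps_0=\eps_0(N)>0$ such that whenever $W$ is a cyclically symmetric power series whose norm is below $\eps_0$, the free Gibbs state $\tau_V$ associated with $V=\tfrac12\sum_i X_i^2+W$ exists and is unique, the transport map sending a free semicircular family to the variables with law $\tau_V$ is a non-commutative power series close to the identity, as is its inverse, and hence the von Neumann algebra generated by those variables is $L(\fz_N)$ while the $C^*$-algebra they generate is $C^*(S_1,\ldots,S_N)$. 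Since $\Ga_0(\rz^N)$ is generated by a free semicircular family, $\Ga_0(\rz^N)\cong L(\fz_N)$ by Voiculescu \cite{VDN}; so it suffices to show that once $\max_{i,j}|q_{ij}|$ is small --- in a way depending only on $N$ --- the mixed $q$-Gaussian law is a free Gibbs state of this kind.

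The heart of the matter is to compute and estimate the conjugate system of $(X_1,\ldots,X_N)$ with respect to the free difference quotients $\partial_1,\ldots,\partial_N$. Working on the mixed $q$-Fock space, I would first derive a closed formula for the conjugate variables $\xi_1,\ldots,\xi_N$, generalizing the computation of Dabrowski \cite{Dab} for the scalar case $q_{ij}\equiv q$. This formula exhibits each $\xi_i$ as $X_i$ plus a power series whose coefficients are assembled from the inverses of the $Q$-deformed Gram operators $P_n^{(Q)}$ on the $n$-particle subspaces --- each $P_n^{(Q)}$ being a positive invertible operator given by a sum over permutations carrying scalar weights that are products of the $q_{ij}$'s over inversions. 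Two quantitative inputs are then required: first, bounds on $(P_n^{(Q)})^{-1}$ (and on the associated number-type and Wick-type operators) with an explicit dependence on $\max_{i,j}|q_{ij}|$ and controlled growth in $n$ --- the mixed-parameter refinement of the Bo\.zejko--Speicher positivity estimates \cite{BS94}, in the spirit of the work of Nou and Kr\.olak \cites{Nou,Kr05}; and second, combinatorial estimates in which the scalar weight $\prod q_{i_k i_l}$ attached to each monomial, a product over the crossings of an underlying diagram, is dominated by $(\max_{i,j}|q_{ij}|)^{\#\{\text{crossings}\}}$, after which the remaining purely enumerative sums --- over diagrams, with at most exponential growth in the Fock level and with $N$ possible colours at each site --- are summed. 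Combining these shows that each $\xi_i$ lies in the analytic algebra for a suitable radius $R$ and that $\sup_i\|\xi_i-X_i\|\to0$ as $\max_{i,j}|q_{ij}|\to0$, at a rate depending only on $N$.

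With the conjugate system under control, one produces the potential exactly as in Dabrowski's treatment: the mixed $q$-Gaussian structure makes $(\xi_i)$ closed in the appropriate (cyclic) sense, so there is a cyclically symmetric power series $W_Q$ with $\mathscr{D}_i W_Q=\xi_i-X_i$ for every $i$ (here $\mathscr{D}_i$ denotes the $i$-th cyclic derivative), and $\|W_Q\|$ is controlled by $\sup_i\|\xi_i-X_i\|$, so it too tends to $0$ with $\max_{i,j}|q_{ij}|$. By the uniqueness of the free Gibbs state for small potentials, the mixed $q$-Gaussian functional must coincide with $\tau_{V_Q}$, where $V_Q=\tfrac12\sum_i X_i^2+W_Q$. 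Choosing $q_0=q_0(N)>0$ small enough that $\max_{i,j}|q_{ij}|<q_0$ forces $\|W_Q\|<\eps_0(N)$, the transport theorem yields $\Ga_Q=\Ga_{V_Q}\cong L(\fz_N)\cong\Ga_0(\rz^N)$ and, at the $C^*$-level, $C^*(X_1,\ldots,X_N)\cong C^*(S_1,\ldots,S_N)$.

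The main obstacle is carrying out the estimates of the second paragraph with a genuine matrix $Q$ in place of a scalar $q$. When all deformation parameters agree, Dabrowski can exploit the resulting simplifications: the Fock-space operators are governed by a single parameter, which makes both the operator-norm bounds and the passage to the polynomial algebra manageable. For a symmetric matrix the weights attached to distinct crossings are different scalars that do not collapse to a single one, so one must propagate the products $\prod q_{i_k i_l}$ through the Wick calculus, bound them crossing by crossing, and then check that the surviving enumerative sums still converge --- and do so at a rate that is uniform in the Fock level and has the correct $N$-dependence, which is precisely what makes $q_0$ a function of $N$ alone. The companion technical input, extending the Bo\.zejko--Speicher and Nou--Kr\.olak estimates, is the uniform-in-$n$ control of $(P_n^{(Q)})^{-1}$ with explicit constants; this is the point most directly responsible for the size of the threshold $q_0(N)$.
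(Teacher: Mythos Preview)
Your strategy is correct and matches the paper's: construct the conjugate variables as analytic power series close to the $X_i$'s, integrate them to a potential $V$ close to $\tfrac12\sum Y_i^2$, and invoke \cite{GS14}*{Corollary 4.3}. The main organizational difference is that the paper does not attack the conjugate variables by direct crossing combinatorics. Instead it introduces an auxiliary derivation $\partial_j^{(Q)}=[\,\cdot\,,r_j]$ and a diagonal Fock operator $\Xi_j$ with $\partial_j^{(Q)}(X_i)=\delta_{ij}\Xi_j$ and $\partial_j^{(Q)*}(1\otimes 1^\circ)=X_j$; the single estimate one then needs is that $\Xi_j$ admits a power-series expansion with $\|\Xi_j-1\otimes1^\circ\|_R\to 0$ as $q\to 0$, after which $\xi_j:=\partial_j^{(Q)*}((\Xi_j^{-1})^*)$ is the conjugate variable and $\|\xi_j-Y_j\|_R\to 0$ follows formally. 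This packaging replaces your proposed ``bound each crossing and sum the diagrams'' program by one Gram-matrix estimate (the Bo\.zejko bound $\|(P^{(n)})^{-1}\|\le((1-q)/(1-2q))^n$, which is \emph{not} uniform in $n$ but has the right exponential growth) together with the Wick recursion; it is shorter and avoids tracking colour-dependent crossing weights. Finally, your appeal to uniqueness of the free Gibbs state is unnecessary: once $\xi_i=D_iV$ are the conjugate variables of $(X_1,\ldots,X_N)$, the trace $\tau_Q$ \emph{is} by definition a free Gibbs state for $V$, and \cite{GS14}*{Corollary 4.3} applies directly.
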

The proof of this theorem relies on the construction of the conjugate variables and potentials for $\Ga_Q$. To this end, we follow the idea of Dabrowski \cite{Dab} and obtain some estimates which are generalized from similar ones for the $q_{ij}\equiv q$ case.


\section{The Mixed $q$-Gaussian Algebra}

We refer the readers to \cites{BS94,LP99,JZ15} for unexplained preliminary facts for the mixed $q$-Gaussian variables. Let $(e_i)_{i=1}^N$ be an orthonormal basis of $\rz^N$. The Fock space associated with the mixed $q$-Gaussian variables is defined as $\fx_Q=\oplus_{n=0}^\8 H_Q^n$, where $H_Q^n$ is isomorphic to $(\cz^N)^{\otimes n}$ as a vector space and $H_Q^0=\cz \Om$ with $\Om$ being the vacuum state. Let $S_n$ denote the symmetric group on $n$ elements and write $\vec{i}=(i_1,\ldots,i_n)$ for a vector in $[N]^n:=\{1,\ldots,N\}^n$. The inner product of $\fx_Q$ is given by
\[
\lge e_{i_1}\otimes\cdots\otimes e_{i_m}, e_{j_1}\otimes\cdots\otimes e_{j_n} \rge_Q = \de_{m,n} \sum_{\si\in S_n} a(\si,\vec{j}) \lge e_{i_1}, e_{j_{\si^{-1}(1)}}\rge \cdots \lge e_{i_m}, e_{j_{\si^{-1}(n)}}\rge.
\]
Here $a(\si,\vec{j})$ is a product of $(q_{kl})$ defined as follows: We write $\tau_{1}=(12), \tau_2=(23),\ldots,\tau_n=(n1)$ for transpositions. It is well known that $(\tau_i)_{i=1}^n$ is a generating set of $S_n$ and that the number of inversions of $\si\in S_n$ is given by
\[
|\si| = \min\{k\in \nz: \si=\tau_{i_1}\cdots\tau_{i_k}\}.
\]
For $\si\in S_n$, assume $|\si|=k$ and $\si=\tau_{m_1}\cdots\tau_{m_k}$. Then (see \cites{BS94,LP99})
\[
a(\si, \vec{i}) = \prod_{j=1}^{k-1} q(i_{\si_j(m_{k-j})},i_{\si_j(m_{k-j}+1)})q(i_{m_k}, i_{m_k +1}),
\]
where $\si_j=\tau_{m_{k-j+1}}\cdots\tau_{m_k}$ and we have written $q_{i_1i_2}=q(i_1,i_2)$. By definition,
\[
l_i(e_{j_1}\otimes \cdots\otimes e_{j_n}) = e_i\otimes e_{j_1}\otimes \cdots\otimes e_{j_n},
\]
\[
r_i(e_{j_1}\otimes \cdots\otimes e_{j_n})=e_{j_1}\otimes \cdots\otimes e_{j_n}\otimes e_i,
\]
\[
l_i^*(e_{j_1}\otimes \cdots\otimes e_{j_n})=\sum_{k=1}^n \de_{i,j_k} q_{ij_1}\cdots q_{ij_{k-1}} e_{j_1}\otimes \cdots\otimes e_{j_{k-1}}\otimes e_{j_{k+1}} \otimes\cdots\otimes e_{j_n}.
\]
Here $l_i=l(e_i)$ is the left creation operator and $l_i^*$ the left annihilation operator. One can check that $l_i^*$ is the adjoint operator of $l_i$ with respect to the inner product $\lge\cdot,\cdot \rge_Q$ of $L^2(\Ga_Q,\tau_Q)$. Similarly, $r_i$  and $r_i^*$ are the right creation and annihilation operator, respectively. Let $X_i= l_i + l_i^*$ be the mixed $q$-Gaussian variables. Let $\Ga_Q$ denote the mixed $q$-Gaussian von Neumann algebra generated by $X_i, i=1,\ldots,N$. By \cite{BS94}, there is a normal faithful tracial state $\tau_Q$ on $\Ga_Q$ defined as $\tau_Q(X)=\lge X\Om, \Om\rge_Q$ for $X\in \Ga_Q$. If $\max_{ij} |q_{ij}|<1$, then there is a canonical unitary isomorphism between $L^2(\Ga_Q,\tau_Q)$ and $\fx_Q$ given by
\[
X\mapsto X\Om, \text{ for } X\in \Ga_Q,
\]
which extends continuously to $L^2(\Ga_Q)$. From time to time this identification will be used implicitly in the following and we write $\lge\cdot,\cdot\rge_{\tau_Q}$ for the inner product of $L^2(\Ga_Q,\tau_Q)$. Given a finite-length tensor $\xi\in \fx_Q$, there is a unique element $W(\xi)$ in $\Ga_Q$ such that $W(\xi)\Om= \xi$, and $W(e_{i_1}\otimes\cdots\otimes e_{i_n})$ is called the Wick word (a.k.a.\!  Wick product in the literature) of $e_{i_1}\otimes\cdots\otimes e_{i_n}$.

Following \cites{GS14,Dab}, we consider $\cz\lge Y_1,\ldots,Y_N\rge$, the algebra of noncommutative polynomials in $N$ self-adjoint variables. Given a noncommutative power series
$$F(Y_1,\ldots,Y_N)=\sum_{\vec{i},p} a_{\vec{i},p} Y_{i_1}\cdots Y_{i_p}\otimes Y_{i_{p+1}}\cdots Y_{i_n}$$
whose radius of convergence is greater than $R>1$, we define the norm $\|F\|_R = \sum_{\vec{i},p} |a_{\vec{i},p}| R^n$. Similarly, for
	\[
		F(Y_1,\ldots, Y_N)=\sum_{\vec{i}} a_{\vec{i}} Y_{i_1}\cdots Y_{i_n},
	\]
with radius of convergence greater than $R>1$ we define $\|F\|_R=\sum_{\vec{i}} |a_{\vec{i}}| R^n$. For an algebra $\ax$, we write $\ax^{op}$ for the opposite algebra of $\ax$, and write $a^\circ\in \ax^{op}$ whenever $a\in \ax$.

\section{The Derivation $\partial_j^{(Q)}$ and $\Xi_j$}

Consider the linear map
\[
 \partial_j^{(Q)}: \cz\lge X_1,\ldots,X_N\rge \to \bx(L^2(\Ga_Q)), \quad \partial_j^{(Q)}(X) = [X, r_j]:= Xr_j-r_jX.
\]
For $i=1,\ldots,N$, define
\[
\Xi_i: \fx_Q\to \fx_Q, \quad \Xi_i (e_{j_1}\otimes \cdots\otimes e_{j_n}) = q_{ij_1}\cdots q_{ij_n}e_{j_1} \otimes \cdots\otimes e_{j_n}.
\]
We also write $q_i(\vec{j})=q_{ij_1}\cdots q_{ij_n}$ for short.

For each $n\geq 1$, we consider the following equivalence relation on $[N]^n$: $\vec{i}\sim\vec{j}$ if $\exists \sigma\in S_n$ such that
	\begin{align*}
		\vec{i}=\sigma\cdot\vec{j}=(j_{\sigma(1)},\ldots, j_{\sigma(n)}).
	\end{align*}
Let $[\vec{i}]$ denote the equivalence class of $\vec{i}\in [N]^n$. Note that $q_k(\vec{j})=q_k(\vec{i})$ for each $\vec{j}\in [\vec{i}]$ and each $k=1,\ldots, N$; consequently, we may at times denote $q_k(\vec{i})$ by $q_k([\vec{i}])$. For each equivalence class $[\vec{i}]$ we define the subspace
	\begin{align*}
		\fx_{[\vec{i}]}:=\text{span}\left\{ e_{j_1}\otimes\cdots\otimes e_{j_n}\colon \vec{j}\in [\vec{i}]\right\},
	\end{align*}
and denote by $p_{[\vec{i}]}$ the orthogonal projection onto $\fx_{[\vec{i}]}$. It is easy to see that $H_Q^0$ along with the subspaces $\fx_{[\vec{i}]}$  (ranging over all equivalence classes and all $n\geq 1$) offers an orthogonal decomposition of $\fx_Q$, and consequently
	\begin{align*}
		p_\Omega + \sum_{n\geq 1} \sum_{[\vec{i}]\in [N]^n/\sim} p_{[\vec{i}]} = 1,
	\end{align*}
where $p_\Omega$ is the projection onto the vacuum vector. For notational consistency, we will often denote $p_\Omega=p_{[(\emptyset)]} \in [N]^0/\sim$.

For each $j=1,\ldots, N$ it follows that
	\begin{align}\label{HS}
		\Xi_j =\sum_{n\geq 0} \sum_{[\vec{i}]\in [N]^n/\sim} q_j(\vec{i})p_{[\vec{i}]}.
	\end{align}
Since $q_j(\vec{i})$ are real numbers, $\Xi_j$ is a self-adjoint operator. Moreover, if $q:=\max_{1\leq i,j\leq N}|q_{ij}|$ satisfies $q^2N<1$ then $\Xi_j\in HS(\fx_Q)$, the Hilbert--Schmidt operators on $\fx_Q$, since for each $n\geq 1$
	\begin{align*}
		\sum_{[\vec{i}]\in [N]^n/\sim} \|p_{[\vec{i}]}\|_{HS}^2 =\sum_{k_1+\cdots +k_N=n} \binom{n}{k_1,\ldots,k_N}= N^n.
	\end{align*}

Noting that $[l_i,r_j]=0$, we see that
	\[
		{\partial}^{(Q)}_j(X_i)(e_{i_1}\otimes\cdots\otimes e_{i_n}) = \de_{i,j} q_{ii_1}\cdots q_{ii_n} e_{i_1}\otimes\cdots\otimes e_{i_n},
	\]
and hence ${\partial}^{(Q)}_j(X_i)=\de_{i,j} \Xi_j$. As the space of Hilbert--Schmidt operators is a two-sided ideal in $\bx(\fx_Q)$, the Leibniz rule implies ${\partial}^{(Q)}_j$ maps $\cz\lge X_1,\ldots, X_N\rge$ into $HS(\fx_Q)$ for each $j=1,\ldots, N$ whenever $\Xi_j\in HS(\fx_Q)$. When this is the case, we think of $\partial_j^{(Q)}$ as a densely defined derivation
	\[
		\partial_j^{(Q)}\colon L^2(\Ga_Q,\tau_Q)\to HS(\fx_Q).
	\]
Recall that $L^2(\Ga_Q\bar{\otimes}\Ga_Q^{op},\tau_Q\otimes \tau_Q^{op})$ is isomorphic to $HS(\fx_Q)$ via the map
	\begin{align*}
		a\otimes b^\circ\mapsto \lge\cdot, b^*\Omega\rge a\Omega.
	\end{align*}
In particular, $1\otimes 1^\circ \mapsto p_\Omega$. We will usually think of $\partial_j^{(Q)}$ as having range $L^2(\Ga_Q\bar{\otimes}\Ga_Q^{op},\tau_Q\otimes\tau_Q^{op})$.

\begin{prop}\label{deriv}
Suppose $\Xi_j\in HS(\fx_Q)$. Then $\partial_j^{(Q)*}(1\otimes 1^\circ)=X_j$.
\end{prop}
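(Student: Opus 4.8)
The plan is to verify the defining adjoint relation directly: show that for every noncommutative polynomial $X \in \cz\lge X_1,\ldots,X_N\rge$ one has
\[
\lge \partial_j^{(Q)}(X), 1\otimes 1^\circ\rge_{\tau_Q\otimes\tau_Q^{op}} = \lge X, X_j\rge_{\tau_Q}.
\]
Since polynomials are a core for $\partial_j^{(Q)}$, this identifies $X_j$ as lying in the domain of the adjoint with $\partial_j^{(Q)*}(1\otimes 1^\circ) = X_j$. The left-hand side, under the isomorphism $HS(\fx_Q) \cong L^2(\Ga_Q\bar\otimes\Ga_Q^{op})$, is just the coefficient of $p_\Omega$ in the Hilbert--Schmidt expansion of $\partial_j^{(Q)}(X) = [X, r_j]$; equivalently it equals $\lge [X,r_j]\,\xi, \xi\rge$ summed appropriately, but most cleanly it is $\lge \partial_j^{(Q)}(X)\Omega, \Omega\rge$ composed with the trace pairing. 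I would reduce to monomials $X = X_{i_1}\cdots X_{i_m}$ and expand $[X_{i_1}\cdots X_{i_m}, r_j]$ via the Leibniz rule as $\sum_{k=1}^m X_{i_1}\cdots X_{i_{k-1}}\,[X_{i_k},r_j]\,X_{i_{k+1}}\cdots X_{i_m}$, using $[X_{i_k},r_j] = [l_{i_k}^* , r_j]$ since $[l_{i_k},r_j]=0$ and also (one checks) $[l_{i_k}^*, r_j]$ is, as a Hilbert--Schmidt element, $\delta_{i_k,j}$ times a rank-one-type operator that under the identification corresponds to a simple tensor.

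The cleaner route, which I would actually follow, is to pair against $1\otimes1^\circ = p_\Omega$ and use that the Hilbert--Schmidt inner product of $T$ with $p_\Omega = \lge\cdot,\Omega\rge\Omega$ is $\lge T\Omega,\Omega\rge$. So I must compute $\lge [X,r_j]\Omega, \Omega\rge_Q$ and show it equals $\tau_Q(X X_j) = \lge X X_j\Omega,\Omega\rge_Q = \lge X\Omega, X_j\Omega\rge_Q$ (using traciality and self-adjointness of $X_j$). Now $r_j\Omega = e_j$ (the right creation operator applied to the vacuum), and $r_j^*\Omega = 0$, so $[X,r_j]\Omega = X r_j\Omega - r_j X\Omega = X e_j - r_j(X\Omega)$. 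Taking inner product with $\Omega$: the second term $\lge r_j(X\Omega),\Omega\rge = \lge X\Omega, r_j^*\Omega\rge = 0$. Hence $\lge[X,r_j]\Omega,\Omega\rge_Q = \lge X e_j,\Omega\rge_Q = \lge X\Omega, X e_j\rge$? — not quite; I need to be careful that $X e_j$ means $X$ acting on the vector $e_j = r_j\Omega \in \fx_Q$. Since $r_j$ commutes with all left operators $l_i, l_i^*$ and $X$ is a polynomial in the $X_i = l_i + l_i^*$, we have $X(r_j\Omega) = r_j(X\Omega) = $ the vector $X\Omega$ with $e_j$ appended on the right. Therefore $\lge X e_j,\Omega\rge_Q = \lge r_j X\Omega, \Omega\rge_Q = \lge X\Omega, r_j^*\Omega\rge = 0$ — which would give the wrong answer, so this naive computation shows I have mis-set-up the pairing and must instead use the $L^2(\Ga_Q\bar\otimes\Ga_Q^{op})$ identification properly rather than the ``apply to $\Omega$'' shortcut.

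Accordingly, the correct computation goes through the explicit form of $\partial_j^{(Q)}$ on monomials: by the already-established facts $\partial_j^{(Q)}(X_i) = \delta_{i,j}\Xi_j$ and $\Xi_j = \sum_{n\ge0}\sum_{[\vec i]} q_j(\vec i) p_{[\vec i]}$, together with the Leibniz rule, one writes $\partial_j^{(Q)}(X_{i_1}\cdots X_{i_m})$ as a sum over the positions $k$ with $i_k = j$ of terms $W(\text{left part})\,\Xi_j\,W(\text{right part})^\circ$-type expressions inside $L^2(\Ga_Q\bar\otimes\Ga_Q^{op})$; then the coefficient of $1\otimes1^\circ$ is extracted by checking which such terms have both ``legs'' equal to $\Omega$, i.e. when the Wick words on each side reduce to scalars. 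Matching these against the expansion of $\tau_Q(X_{i_1}\cdots X_{i_m} X_j)$ via the moment-cumulant / pairing formula for the mixed $q$-Gaussian state (where $X_j$ at the end must be contracted with exactly one of $X_{i_1},\ldots,X_{i_m}$, contributing the corresponding $q$-weight $a(\sigma,\vec i)$) yields the identity coefficient-by-coefficient. The main obstacle is precisely this bookkeeping: tracking how the product of $q_{ij}$'s produced by $\Xi_j$ and by ``sliding'' $r_j$ past the intervening letters matches the combinatorial weight $a(\sigma,\vec i)$ appearing in the Fock inner product when $X_j$ is paired with the $k$-th variable. Once the weights are shown to agree term by term — which is a finite, purely combinatorial check using the definition of $a(\sigma,\vec i)$ and the $q$-commutation relation \eqref{qcom} — the Proposition follows, and boundedness issues are handled by the standing hypothesis $\Xi_j\in HS(\fx_Q)$, which guarantees $\partial_j^{(Q)}$ maps polynomials into $L^2(\Ga_Q\bar\otimes\Ga_Q^{op})$ so that all pairings are finite.
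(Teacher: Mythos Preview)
Your overall plan is right, and in fact your \emph{first} approach already works; you abandoned it because of an algebraic slip, not because the pairing was ``mis-set-up.'' The identification $\lge T, p_\Omega\rge_{HS}=\lge T\Omega,\Omega\rge_Q$ is correct. The mistake is the claim that $r_j$ commutes with $l_i^*$: it does not. A direct computation on simple tensors gives $[l_i^*,r_j]=\delta_{i,j}\,\Xi_j$ (this is exactly what forces $\partial_j^{(Q)}(X_i)=\delta_{i,j}\Xi_j$, since $[l_i,r_j]=0$). Consequently $X(r_j\Omega)\neq r_j(X\Omega)$ in general. The right move is to note $r_j\Omega=e_j=X_j\Omega$, so $Xe_j=XX_j\Omega$, whence
\[
\lge[X,r_j]\Omega,\Omega\rge_Q=\lge XX_j\Omega,\Omega\rge_Q-\lge r_j(X\Omega),\Omega\rge_Q=\tau_Q(XX_j)-0=\lge X,X_j\rge_{\tau_Q},
\]
using that $r_j(X\Omega)$ has no vacuum component. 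That finishes the proof in two lines.

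Your second route---expand $\partial_j^{(Q)}(X_{i_1}\cdots X_{i_m})$ by Leibniz as $\sum_{t:\,i_t=j} X_{i_1}\cdots X_{i_{t-1}}\Xi_j X_{i_{t+1}}\cdots X_{i_m}$ and pair each summand with $p_\Omega$---is essentially the paper's proof. The paper phrases the right-hand side not via a moment--cumulant formula but by projecting $X_{i_1}\cdots X_{i_n}\Omega$ onto tensors of length one, writing the result as $\sum_t c_t e_{i_t}$, and showing combinatorially that $c_t=\lge X_{i_1}\cdots X_{i_{t-1}}\Xi_{i_t} X_{i_{t+1}}\cdots X_{i_n}\Omega,\Omega\rge_Q$ by tracking how annihilation operators ``skip over'' the surviving $e_{i_t}$ at cost $q_{i_t}(\vec{j})$. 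Your description of the matching (``bookkeeping of $q$-weights'') is accurate but less concrete than what the paper carries out. Either way, the direct argument above is shorter than both.
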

\begin{proof}
Fix $\vec{i}\in [N]^n$ and let $\pi_1\in \mathcal{B}(\mathcal{F}_Q)$ denote the projection onto tensors of length one. Then there exist scalars $c_1,\ldots, c_n$ such that
	\[
		\pi_1 X_{i_1}\cdots X_{i_n}\Omega = \sum_{t=1}^n c_t  e_{i_t},
	\]
where we are summing over which operator $X_{i_1},\ldots, X_{i_n}$ created the vector $e_{i_t}$. We claim
	\begin{align*}
		c_t &= \sum_{d\geq 0} \sum_{[\vec{j}]\in [N]^d/\sim} q_{i_t}(\vec{j})\lge X_{i_1}\cdots X_{i_{t-1}}p_{[\vec{j}]} X_{i_{t+1}}\cdots X_{i_n}\Omega, \Omega\rge_Q\\
		&= \lge X_{i_1}\cdots X_{i_{t-1}} \Xi_{i_t} X_{i_{t+1}}\cdots X_{i_n}\Omega, \Omega\rge_Q.
	\end{align*}
First note that the second equality is immediate from (\ref{HS}). Now, the only terms from $\pi_1 X_{i_1}\cdots X_{i_n}\Omega$ which contribute to $c_t$ are those where $X_{i_t}$ creates $e_{i_t}$; that is, ones where the creation operator rather than the annihilation operator in $X_{i_t}$ acts. Hence towards computing $c_t$ we may replace $X_{i_t}$ with $l_{i_t}$ and compute
	\[
		\pi_1 X_{i_1}\cdots X_{i_{t-1}} l_{i_t} X_{i_{t+1}}\cdots X_{i_n} \Omega.
	\]
Recall that we have the partition of unity $\{p_{[\vec{j}]}\colon d\geq 0,\ [\vec{j}]\in [N]^d/\sim\}$. For each $d\geq 0$ and $[\vec{j}]\in [N]^d/\sim$, let $\{\zeta^{[\vec{j}]}_\ell\}$ be an orthonormal basis for $\fx_{[\vec{j}]}$. Then we have
	\begin{align*}
		\pi_1 X_{i_1}\cdots X_{i_{t-1}} &l_{i_t} X_{i_{t+1}}\cdots X_{i_n} \Omega\\
			&=\sum_{d\geq 0} \sum_{[\vec{j}]\in [N]^d/\sim} \pi_1 X_{i_1}\cdots X_{i_{t-1}} l_{i_t}  p_{[\vec{j}]} X_{i_{t+1}}\cdots X_{i_n} \Omega\\
			&=\sum_{d\geq 0} \sum_{[\vec{j}]\in [N]^d/\sim}\sum_{\ell} \pi_1 X_{i_1}\cdots X_{i_{t-1}} e_{i_t}\otimes \zeta^{[\vec{j}]}_\ell \left\lge X_{i_{t+1}}\cdots X_{i_n}\Omega, \zeta^{[\vec{j}]}_\ell\right\rge_Q.
	\end{align*}
Furthermore, of the above terms the only ones which contribute to $c_t$ are those where $e_{i_t}$ survives; that is, where none of the operators $X_{i_1},\ldots, X_{i_{t-1}}$ annihilate $e_{i_t}$. And yet, to survive the action of $\pi_1$, $\zeta_\ell^{[\vec{j}]}$ must be completely annihilated by $X_{i_1}\cdots X_{i_{t-1}}$. The annihilation operators from $X_{i_1}\cdots X_{i_{t-1}}$ tasked with this must each skip over $e_{i_t}$ at a scalar cost $q_{i_t k}$ for some $k\in[N]$. Since $\zeta_\ell^{[\vec{j}]}$ is a linear combination of $e_{k_1}\otimes\cdots\otimes e_{k_d}$, $\vec{k}\in [\vec{j}]$, the total scalar cost will be $q_{i_t}(\vec{j})$. The remaining actions of $X_{i_1}\cdots X_{i_{t-1}}$ (any creation operators and any annihilation operators acting on vectors left of $e_{i_t}$ in the tensor product) are unaffected by the presence of $e_{i_t}$. In summary, the contribution to $c_t$ from the terms in the sum above is as follows:
\[
	\sum_{d\geq 0} \sum_{[\vec{j}]\in [N]^d/\sim} \sum_{\ell} q_{i_t}(\vec{j})e_{i_t} \left\lge  X_{i_1}\cdots X_{i_{t-1}} \zeta_{\ell}^{[\vec{j}]}, \Omega\right\rge_Q \left\lge X_{i_{t+1}}\cdots X_{i_n}\Omega, \zeta^{[\vec{j}]}_\ell\right\rge_Q.
\]
Noting that
\[
	\sum_{\ell}  \left\lge  X_{i_1}\cdots X_{i_{t-1}} \zeta_{\ell}^{[\vec{j}]}, \Omega\right\rge_Q \left\lge X_{i_{t+1}}\cdots X_{i_n}\Omega, \zeta^{[\vec{j}]}_\ell\right\rge_Q= \left\lge X_{i_1}\cdots X_{i_{t-1}} p_{[\vec{j}]} X_{i_{t+1}}\cdots X_{i_n}\Omega, \Omega\right\rge_Q,
\]
we see that $c_t$ has the claimed value.

Thus for $s\in[N]$ we have
	\begin{align*}
		\lge X_s, X_{i_1}\cdots X_{i_n}\rge_{\tau_Q} &= \lge e_s, \pi_1X_{i_1}\cdots X_{i_n}\Omega\rge_Q\\
			&= \sum_{t=1}^n \lge e_s, e_{i_t}\rge_Q \lge\Omega, X_{i_1}\cdots X_{i_{t-1}} \Xi_{i_t} X_{i_{t+1}}\cdots X_{i_n}\Omega\rge_Q\\
			&= \lge p_\Omega, \partial_s^{(Q)}(X_{i_1}\cdots X_{i_n})\rge_{HS}\\
			&= \lge 1\otimes 1^\circ, \partial_s^{(Q)}(X_{i_1}\cdots X_{i_n})\rge_{\tau_Q\otimes\tau_Q^{op}}.
	\end{align*}
Extending this via linearity from monomials to the dense subset $\cz\lge X_1,\ldots, X_N\rge$ in the domain of $\partial_s^{(Q)}$ concludes the proof.
\end{proof}

\begin{cor}\label{closable}
Suppose $\Xi_j\in HS(\fx_Q)$. Then
	\[
		\cz\lge X_1,\ldots, X_N\rge\otimes \cz\lge X_1,\ldots, X_N\rge^{op}\subset \Dom{\partial_j^{(Q)*}}.
	\]
In particular, for $a,b\in \cz\lge X_1,\ldots, X_N\rge$
	\begin{align}\label{adj}
		\partial_j^{(Q)*}(a\otimes b^\circ) = aX_j b - m\circ(1\otimes \tau_Q\otimes 1)\circ(1\otimes \partial_j^{(Q)} + \partial_j^{(Q)}\otimes 1)(a\otimes b^\circ),
	\end{align}
where $m(a\otimes b^\circ)=ab$. Consequently, $\partial_j^{(Q)}$ is closable.
\end{cor}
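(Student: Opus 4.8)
The plan is to establish the adjoint formula \eqref{adj} by a direct computation against the dense domain, using Proposition \ref{deriv} as the base case, and then deduce closability from the fact that $\partial_j^{(Q)}$ has a densely defined adjoint.

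First I would reduce to the case $b = 1$ by a symmetry/bimodule argument, or simply work with general $a, b$ from the start. The key identity to verify is that for all $c, d \in \cz\lge X_1,\ldots,X_N\rge$,
\[
	\lge \partial_j^{(Q)}(cd), a\otimes b^\circ\rge_{\tau_Q\otimes\tau_Q^{op}} = \lge cd, aX_jb - m\circ(1\otimes\tau_Q\otimes 1)\circ(1\otimes\partial_j^{(Q)} + \partial_j^{(Q)}\otimes 1)(a\otimes b^\circ)\rge_{\tau_Q}.
\]
Here I would use the Leibniz rule $\partial_j^{(Q)}(cd) = \partial_j^{(Q)}(c)\cdot d + c\cdot\partial_j^{(Q)}(d)$, where the bimodule actions on $L^2(\Ga_Q\bar\otimes\Ga_Q^{op})$ are $x\cdot(u\otimes v^\circ)\cdot y = (xu)\otimes(vy)^\circ$. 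Pairing each term and using the trace property of $\tau_Q\otimes\tau_Q^{op}$ together with Proposition \ref{deriv} (which handles the "boundary" term $aX_jb$ coming from $\partial_j^{(Q)*}(1\otimes 1^\circ) = X_j$), the inner products reorganize into the claimed expression. The operator $m\circ(1\otimes\tau_Q\otimes 1)\circ(1\otimes\partial_j^{(Q)})$ applied to $a\otimes b^\circ$ contracts the $\Ga_Q^{op}$-leg of $a$ against the left leg of $\partial_j^{(Q)}(b)$, and symmetrically for the other term; this is exactly the standard conjugate-variable manipulation from \cite{GS14, Dab}. Concretely, one checks it first for monomials $a = X_{i_1}\cdots X_{i_p}$, $b = X_{i_{p+1}}\cdots X_{i_n}$ by peeling off generators one at a time and invoking the $b=1$, $a=1$ cases, then extends by linearity.

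Once \eqref{adj} is established, the containment $\cz\lge X_1,\ldots,X_N\rge\otimes\cz\lge X_1,\ldots,X_N\rge^{op}\subset\Dom\partial_j^{(Q)*}$ is immediate, since the right-hand side of \eqref{adj} is a bona fide element of $L^2(\Ga_Q,\tau_Q)$ (indeed of $\Ga_Q$) and the defining pairing holds on the dense subspace $\cz\lge X_1,\ldots,X_N\rge$. Closability then follows from the general functional-analytic fact that a densely defined operator with densely defined adjoint is closable: here $\partial_j^{(Q)}$ is densely defined (domain $\cz\lge X_1,\ldots,X_N\rge$), and we have just exhibited a dense subspace in the domain of $\partial_j^{(Q)*}$, so $(\partial_j^{(Q)})^{**} = \overline{\partial_j^{(Q)}}$ exists and is closed.

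The main obstacle will be the bookkeeping in the monomial computation for \eqref{adj}: correctly tracking how the Leibniz rule distributes across a product, how the two pieces $1\otimes\partial_j^{(Q)}$ and $\partial_j^{(Q)}\otimes 1$ each produce a contraction term via $m\circ(1\otimes\tau_Q\otimes 1)$, and ensuring the signs and the placement of $X_j$ match up so that everything beyond the $aX_jb$ term telescopes correctly. The analytic input is minor — one only needs $\Xi_j\in HS(\fx_Q)$ so that $\partial_j^{(Q)}$ genuinely lands in $L^2(\Ga_Q\bar\otimes\Ga_Q^{op})$ and Proposition \ref{deriv} applies — so the real content is this algebraic identity, which is a mixed-$q$ analogue of the well-known formula for free difference quotients.
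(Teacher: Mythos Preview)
Your proposal is correct and follows essentially the same approach as the paper: the paper's proof simply cites the standard computation (Voiculescu's Proposition 4.1, Dabrowski's Theorem 34, and \cite{Ne15}*{Corollary 2.4}) for the formula \eqref{adj}, which is precisely the Leibniz-rule-plus-Proposition-\ref{deriv} argument you sketch, and then invokes density of the domain of the adjoint for closability just as you do. Your write-up is in fact more detailed than the paper's own proof.
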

\begin{proof}
The formula is a simple computation (\emph{cf.} Proposition 4.1 in \cite{Voi98}, the proof of Theorem 34 in \cite{Dab}, or Corollary 2.4 in \cite{Ne15}). The closability of $\partial_j^{(Q)}$ then follows because this formula holds on the dense subset $\cz\lge X_1,\ldots, X_N\rge\otimes \cz\lge X_1,\ldots, X_N\rge^{op} \subset L^2(\Ga_Q\bar{\otimes}\Ga_Q^{op},\tau_Q\otimes \tau_Q^{op})$.
\end{proof}

Let us update the notation $\partial_j^{(Q)}$ so that from now on it denotes the closure of this derivation.

Let $\phi: S_n\to \bx(H_Q^n)$ be the quasi-multiplicative function defined in \cite{BS94} and define $P^{(n)}= \sum_{\si\in S_n}\phi(\si)$. According to \cite{BS94}, we have
\[
\lge \xi,\eta\rge_Q= \de_{n,m} \lge \xi, P^{(n)}\eta\rge_0, \text{ for } \xi\in H_Q^n, \eta\in H_Q^m.
\]
Here $\lge\cdot, \cdot\rge_0$ is the inner product associated to $(\Ga_0(\rz^N), \tau_0)$. Let $q=\max_{1\le i,j\le N} |q_{ij}|$. Assume $q<1$. By \cite{Boz}*{Theorem 2}, we find
\begin{equation*}\label{pnorm}
  \|(P^{(n)})^{-1}\|\le \Big[(1-q)\prod_{k=1}^\8\frac{1+q^k}{1-q^k}\Big]^{n}.
\end{equation*}
Using the Gauss identity, we have the estimate
\begin{equation}\label{pnorm2}
  \|(P^{(n)})^{-1}\|\le \Big[(1-q) \Big(\sum_{k=-\8}^\8(-1)^k q^{k^2}\Big)^{-1}\Big]^{n}\le \Big(\frac{1-q}{1-2q}\Big)^{n}.
\end{equation}

\begin{lemma}\label{power}
If $\eps>0$ and $q(3-2q+(3+\eps)^2N^2)<1$, then there exists a noncommutative power series representation of $\Xi_i$ with radius of convergence greater than $R=
\frac{2+\eps}{1-q}>\|X_i\|$ such that
\[
\|\Xi_i-1\otimes 1^\circ\|_R\le \frac{qN^2(3+\eps)^2}{1-q(3-2q+(3+\eps)^2N^2)}=:\pi(q,N)
\]
for $i=1,\ldots,N$.

\end{lemma}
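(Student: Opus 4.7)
The plan is to expand $\Xi_i$ as an explicit noncommutative power series in $X_1,\ldots,X_N$ by combining the orthogonal decomposition \eqref{HS} with the HS identification $HS(\fx_Q)\cong L^2(\Ga_Q\bar\otimes \Ga_Q^{op})$. Starting from
\[
\Xi_i-1\otimes 1^\circ = \sum_{n\ge 1}\sum_{[\vec{j}]\in[N]^n/\sim} q_i(\vec{j})\, p_{[\vec{j}]},
\]
I would represent each projection through its Gram inverse,
\[
p_{[\vec{j}]}=\sum_{\vec{u},\vec{v}\in[\vec{j}]}\bigl(G_{[\vec{j}]}^{-1}\bigr)_{\vec{u},\vec{v}}\, W(e_{\vec{u}})\otimes\bigl(W(e_{\vec{v}})^*\bigr)^\circ,
\]
where $G_{[\vec{j}]}=[\lge e_{\vec{u}},e_{\vec{v}}\rge_Q]_{\vec{u},\vec{v}\in[\vec{j}]}$ is a positive-definite principal block of $P^{(n)}$. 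Eigenvalue interlacing combined with \eqref{pnorm2} gives $\|G_{[\vec{j}]}^{-1}\|\le\|(P^{(n)})^{-1}\|\le\bigl(\tfrac{1-q}{1-2q}\bigr)^n$, which in turn yields the entrywise bound $|(G_{[\vec{j}]}^{-1})_{\vec{u},\vec{v}}|\le\bigl(\tfrac{1-q}{1-2q}\bigr)^n$.

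Next I would control $\|W(e_{\vec{u}})\|_R$ through the standard Wick recursion $W(e_{u_1}\otimes \xi)=X_{u_1}W(\xi)-W(l_{u_1}^*\xi)$. Iterating and using $|q_{ij}|\le q$ yields, for $a_n:=\max_{\vec{u}\in[N]^n}\|W(e_{\vec{u}})\|_R$, the two-term linear inequality
\[
a_n\le R\,a_{n-1}+\tfrac{1}{1-q}\,a_{n-2},\qquad a_0=1,\ a_1=R.
\]
Its characteristic root $\la=\tfrac{1}{2}\bigl(R+\sqrt{R^2+\tfrac{4}{1-q}}\,\bigr)$ satisfies $\la\le R+\tfrac{1}{\sqrt{1-q}}$, so specializing to $R=\tfrac{2+\eps}{1-q}$ gives $\la\le\tfrac{2+\eps+\sqrt{1-q}}{1-q}\le\tfrac{3+\eps}{1-q}$; hence $a_n\le\bigl(\tfrac{3+\eps}{1-q}\bigr)^n$. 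The classical bound $\|X_i\|\le 2/\sqrt{1-q}<R$ from \cite{BS94} secures $R>\|X_i\|$.

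Assembling the pieces with $|q_i(\vec{j})|\le q^n$, the multiplicativity $\|A\otimes B^\circ\|_R=\|A\|_R\|B\|_R$, the fact that $W(e_{\vec{v}})^*$ has real coefficients so that $\|W(e_{\vec{v}})^*\|_R=\|W(e_{\vec{v}})\|_R$, and the elementary counting identity
\[
\sum_{[\vec{j}]\in[N]^n/\sim}|[\vec{j}]|^2=\bigl|\{(\vec{u},\vec{v})\in[N]^n\times[N]^n:\vec{u}\sim\vec{v}\}\bigr|\le N^{2n},
\]
I would obtain the geometric series
\[
\|\Xi_i-1\otimes 1^\circ\|_R\le\sum_{n\ge 1}\left[\frac{qN^2(3+\eps)^2}{(1-q)(1-2q)}\right]^n,
\]
whose sum, upon expanding $(1-q)(1-2q)=1-3q+2q^2$, simplifies exactly to $\pi(q,N)$. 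The strict hypothesis $q(3-2q+(3+\eps)^2N^2)<1$ keeps the ratio strictly below $1$, and by continuity in $\eps$ the same estimate holds at a slightly larger radius $R'>R$, giving the claim about the radius of convergence.

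The main technical obstacle is calibrating the Wick-word recursion so that the constant $(3+\eps)/(1-q)$ emerges cleanly: the specific parameterization $R=(2+\eps)/(1-q)$ is precisely what forces the characteristic root of the recurrence to lie below $(3+\eps)/(1-q)$, and it is this step that is ultimately responsible for the $(3+\eps)^2N^2$ appearing in $\pi(q,N)$. The subsequent Gram-matrix bound, the counting of equivalence classes, and the geometric summation are all routine once the Wick-word estimate is in hand.
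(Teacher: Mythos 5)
Your proof is correct and follows essentially the same route as the paper: you expand $\Xi_i-1\otimes 1^\circ$ over the blocks $\fx_{[\vec{j}]}$ in terms of Wick words weighted by entries of the inverse Gram matrix, bound those entries by $\|(P^{(n)})^{-1}\|\le\bigl(\tfrac{1-q}{1-2q}\bigr)^n$ via \eqref{pnorm2}, bound $\|W(e_{\vec{u}})\|_R$ by $\bigl(\tfrac{3+\eps}{1-q}\bigr)^n$ through the Wick recursion, count at most $N^{2n}$ pairs, and sum the resulting geometric series to get exactly $\pi(q,N)$. The only cosmetic differences are that the paper passes through the orthonormal basis $p_{\vec{i}}=\sum_{\vec{j}}(G_n^{-1/2})_{\vec{i}\vec{j}}\psi_{\vec{j}}$ rather than writing each $p_{[\vec{j}]}$ directly via the Gram inverse, and that it cites Dabrowski's Corollary 29 for the Wick-word bound where you analyze the two-term recurrence explicitly.
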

\begin{proof}
Following the argument of \cite{Dab}, let $G_{n}$ denote the Gram matrix of the inner product on $(\Ga_Q,\tau_Q)$ from the natural basis $(e_{i_1}\otimes \cdots\otimes e_{i_n})$ of $H_Q^n$, where $\vec{i}\in [N]^n$. Namely, $G_n$ is the matrix of $P^{(n)}$ in the basis $(e_{i_1}\otimes \cdots\otimes e_{i_n})$. We write $\psi_{\vec{i}}=W(e_{i_1}\otimes\cdots\otimes e_{i_n})$ for the Wick word. From the isomorphism $L^2(\Ga_Q,\tau_Q)\cong \fx_Q$, we can also write
\[
(G_n)_{\vec{i}\vec{j}} = \lge e_{i_1}\otimes\cdots\otimes e_{i_n},e_{j_1}\otimes\cdots\otimes e_{j_n}\rge_Q = \lge \psi_{\vec{i}}, \psi_{\vec{j}}\rge_{\tau_Q}.
\]
Let us define inductively the noncommutative polynomials, $\psi_\eps = 1$ for the empty word $\eps$ and
\begin{equation}\label{indu}
  \psi_{i_{1},\ldots, i_n}(Y_1,\ldots,Y_N)=Y_{i_1}\psi_{i_2,\ldots,i_n}-\sum_{j= 2}^n \de_{i_1,i_j} \prod_{k=2}^{j-1} q_{i_1i_k} \psi_{i_2,\ldots,i_{j-1},i_{j+1},\ldots,i_n}(Y_1,\ldots,Y_N),
\end{equation}
where the product over empty set is understood to be 1. It can be checked that $\psi_{\vec{i}} = \psi_{\vec{i}}(X_1,\ldots,X_N)$; \emph{cf}. \cite{Kr00}. Let us define $B=G_n^{-1/2}$. Note that $B$ is a positive-definite symmetric $N^n\times N^n$ matrix and that $B_{\vec{i}\vec{j}}=0$ unless $\vec{i}\sim\vec{j}$. For each $|\vec{i}|=n$ let
\begin{equation}\label{cob}
  p_{\vec{i}}(Y_1,\ldots,Y_N) = \sum_{|\vec{j}|=n} B_{\vec{i}\vec{j}}\psi_{\vec{j}}(Y_1,\ldots,Y_N).
\end{equation}
Then $\{p_{\vec{i}}(X_1,\ldots,X_N)\Om\}_{|\vec{i}|=n}$ is an orthonormal basis of $H^n_Q$, and $\{p_{\vec{k}}(X_1,\ldots, X_N)\Om\}_{\vec{k}\in[\vec{i}]}$ is an orthonormal basis of $\fx_{[\vec{i}]}$. We want to write $\Xi_i$ as a sum of tensors. Unlike the $q_{ij}\equiv q$ case considered in \cite{Dab}, $\Xi_i$ behaves more like a multiplier instead of a projection. Consider
\[
\Xi_j(Y_1,\ldots,Y_N)= \sum_{n=0}^\8 \sum_{|\vec{i}|=n} q_j(\vec{i}) p_{\vec{i}}(Y_1,\ldots,Y_N)\otimes p_{\vec{i}}^*(Y_1,\ldots,Y_N).
\]
One can check that
\[
\Xi_j(X_1,\ldots,X_N)\psi_{\vec{i}} = q_j(\vec{i})\psi_{\vec{i}},
\]
which means that $\Xi_j$ can be identified as $\Xi_j(X_1,\ldots,X_N)$ via the isomorphism $\fx_Q \cong L^2(\Ga_Q, \tau_Q)$. By the change of basis formula \eqref{cob}, writing $w_{\vec{j}}= \psi_{\vec{j}}(Y_1,\ldots,Y_N)$, we have
\begin{align*}
	\Xi_j(Y_1,\ldots,Y_N)& = \sum_{n=0}^\8 \sum_{|\vec{i}|=n} q_{j}(\vec{i})\sum_{|\vec{j}|,|\vec{k}|=n} B_{\vec{i}\vec{j}} B_{\vec{k}\vec{i}} w_{\vec{j}}\otimes w_{\vec{k}}^*\\
  &= \sum_{n=0}^\8 \sum_{|\vec{j}|,|\vec{k}|=n} q_j(\vec{k}) (B^2)_{\vec{k}\vec{j}} w_{\vec{j}}\otimes w_{\vec{k}}^*,
\end{align*}
where we have used in the second equality that $B_{\vec{k}\vec{i}}=0$ unless $\vec{k}\sim\vec{i}$, in which case $q_j(\vec{i})=q_j(\vec{k})$. Taking the norm, we have
\[
\Big\|\sum_{|\vec{j}|,|\vec{k}|=n} q_j(\vec{k}) (B^2)_{\vec{k}\vec{j}} w_{\vec{j}}\otimes w_{\vec{k}}^*\Big\|_R \le q^n\sum_{|\vec{k}|=n} \|w_{\vec{k}}\|_R \Big\|\sum_{|\vec{j}|=n} (B^2)_{\vec{k}\vec{j}}w_{\vec{j}}\Big\|_R
\]
By \eqref{indu}, we find in the same way as the proof of \cite{Dab}*{Corollary 29} that
\[
\sup_{|\vec{i}|=n}\|w_{\vec{i}}\|_R\le \left(R+\frac1{1-q}\right)^n.
\]
Using the triangle inequality, we have
\begin{align*}
  \Big\|\sum_{|\vec{j}|=n} (B^2)_{\vec{k}\vec{j}}w_{\vec{j}}\Big\|_R &\le \sum_{|\vec{j}|=n} |(G_n^{-1})_{\vec{k}\vec{j}}| \sup_{|\vec{i}|=n} \|w_{\vec{i}}\|_R\le  N^{n} \|(P^{(n)})^{-1}\|\left(R+\frac1{1-q}\right)^n.
\end{align*}
Combining with \eqref{pnorm2}, we have
\[
\Big\|\sum_{|\vec{j}|,|\vec{k}|=n} q_j(\vec{k}) (B^2)_{\vec{k}\vec{j}} w_{\vec{j}}\otimes w_{\vec{k}}^*\Big\|_R \le  q^nN^{2n}\left(R+\frac1{1-q}\right)^{2n} \Big(\frac{1-q}{1-2q}\Big)^{n}.
\]
Plugging in $R=\frac{2+\eps}{1-q}$ and summing over all $n\ge 1$, we complete the proof.
\end{proof}

\section{Proof of the Main Theorem}

Let us write $\ax=\cz\lge Y_1,\ldots,Y_N\rge$. Suppose $\Xi_i\in \Gamma_Q\bar\otimes \Gamma_Q^{op}$ and is invertible in this algebra. Let $\partial_j: \ax\to  \ax \otimes  \ax^{op}$ denote the $j$-th free difference quotient with the property $\partial_j P =\sum_{P=AY_j B} A\otimes B$ for a monomial $P\in \ax$. Since $\partial_j X_i = \de_{i,j}\Xi_j\# \Xi_j^{-1}=\de_{i,j}1\otimes 1^\circ$, we have $\partial_j = \partial_j^{(Q)}\# \Xi_j^{-1}$, where $\#$ is the multiplication in $\Ga_Q\bar\otimes \Ga_Q^{op}$.

\begin{prop}\label{key}
Assume $\pi(q,N)<1$. Then we have:
\begin{enumerate}
\item[(i)] There exist noncommutative power series $\xi_j(Y_1,\ldots, Y_N)$ of convergence radius $R=\frac{2+\eps}{1-q}>\|X_i\|$ such that $\{\xi_j(X_1,\ldots,X_N)\}_{j=1}^N$ are the conjugate variables of $X_1,\ldots,X_N$.
\item[(ii)] There exists a self-adjoint potential $V(Y_1,\ldots, Y_N)$ which is also a noncommutative power series of convergence radius $R$ such that $D_i V(Y_1,\ldots, Y_N) = \xi_i(Y_1,\ldots, Y_N)$ where $D_i$ is the cyclic gradient, i.e., $D_i P = \sum_{P=AY_i B} BA$ for $P\in \ax$.
\item[(iii)] $\lim_{q\to 0} \|\xi_i(Y_1,\ldots, Y_N)- Y_i\|_R = 0$ for $i=1,\ldots,N$.
\end{enumerate}
\end{prop}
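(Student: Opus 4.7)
The strategy is to exploit the factorization $\partial_j = \partial_j^{(Q)}\#\Xi_j^{-1}$ stated just before the proposition. This reduces everything to two ingredients: inverting $\Xi_j$ in the Banach algebra of noncommutative power series (equipped with $\|\cdot\|_R$) and then applying the adjoint formula \eqref{adj}. Since $\pi(q,N)<1$ by hypothesis, Lemma~\ref{power} together with the submultiplicativity of $\|\cdot\|_R$ gives the Neumann expansion
\[
\Xi_j^{-1} = \sum_{k\geq 0}(1\otimes 1^\circ -\Xi_j)^k,
\]
convergent in $\|\cdot\|_R$, so $\Xi_j^{-1}$ is itself a noncommutative power series of convergence radius greater than $R$ with $\|\Xi_j^{-1} - 1\otimes 1^\circ\|_R \leq \pi(q,N)/(1-\pi(q,N))$.

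For (i), since each $q_j(\vec i)$ is real, $\Xi_j$ (hence $\Xi_j^{-1}$) is self-adjoint, so the factorization gives $\xi_j := \partial_j^*(1\otimes 1^\circ) = \partial_j^{(Q)*}(\Xi_j^{-1})$. Writing $\Xi_j^{-1} = \sum_k a_k \otimes b_k^\circ$ via the Neumann series and applying \eqref{adj} term-wise produces
\[
\xi_j = \sum_k a_k Y_j b_k - m\circ(1\otimes \tau_Q\otimes 1)\circ(1\otimes\partial_j^{(Q)} + \partial_j^{(Q)}\otimes 1)(\Xi_j^{-1}),
\]
where $\tau_Q$ is interpreted on power series via evaluation at $(X_1,\ldots,X_N)$. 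The Leibniz rule together with $\partial_j^{(Q)}(Y_i) = \de_{ij}\Xi_j$ yields a bound of the form $\|\partial_j^{(Q)}(f)\|_R \lesssim \|\Xi_j\|_R$ times a derivative-weighted norm of $f$, and iterating through the Neumann series produces a convergent expression for $\xi_j$ in $\|\cdot\|_R$. Part (iii) is an immediate by-product: as $q\to 0$, $\pi(q,N)\to 0$, so $\Xi_j^{-1}\to 1\otimes 1^\circ$ in $\|\cdot\|_R$, and the formula collapses (using Proposition~\ref{deriv}) to $Y_j$ in the limit.

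Part (ii) then follows from (i) by a by-now standard argument (\emph{cf.}~\cite{Voi98}, \cite{Dab}, \cite{GS14}, \cite{Ne15}): because the $\xi_i$ are conjugate variables of a tracial state they satisfy a Schwarz-type cyclic compatibility condition that is precisely the integrability requirement for the cyclic gradient, and one anti-differentiates the power-series expansion of $\xi_i$ term-by-term to produce a $V$ of convergence radius greater than $R$ with $D_iV=\xi_i$. Self-adjointness of $V$ is forced by $\xi_i^* = \xi_i$, and the $\|V\|_R$ bound is inherited from the $\|\xi_i\|_R$ bounds with only a constant loss from dividing coefficients by their degree.

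The main obstacle I anticipate is the rigorous application of \eqref{adj} to the Neumann series for $\Xi_j^{-1}$ at the power-series level. Each application of $\partial_j^{(Q)}$ inserts an additional factor of $\Xi_j$, which itself must be re-expanded and carefully controlled in $\|\cdot\|_R$; moreover, the composite $m\circ(1\otimes \tau_Q\otimes 1)\circ(1\otimes\partial_j^{(Q)})$ couples the trace functional to the derivation. Obtaining clean $\|\cdot\|_R$ estimates—rather than the coarser ones available via operator norms on $L^2(\Ga_Q)$—requires careful term-by-term bookkeeping that extends the arguments in \cite{Dab} and \cite{Ne15} (which handled the $q_{ij}\equiv q$ case) to the present mixed-$q$ setting.
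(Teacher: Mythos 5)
Your proposal is correct and follows essentially the same route as the paper: invert $\Xi_j$ via the Neumann series in $\|\cdot\|_R$ using $\pi(q,N)<1$, set $\xi_j=\partial_j^{(Q)*}(\Xi_j^{-1})$ via the adjoint formula \eqref{adj}, obtain (iii) from the convergence $\Xi_j^{-1}\to 1\otimes 1^\circ$, and get (ii) by applying the inverse of the number operator to $\tfrac12\sum_i(\xi_iY_i+Y_i\xi_i)$ as in Dabrowski's Step 4. The "main obstacle" you flag (extending \eqref{adj} and the trace--derivation composite to the power-series level in $\|\cdot\|_R$) is exactly what the paper delegates to Lemmas 12 and 36 of \cite{Dab}.
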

\begin{proof}
By Lemma \ref{power}, $\Xi_j^{-1}=\Xi_j^{-1}(X_1,\ldots, X_N)$ for a noncommutative power series $\Xi_j^{-1}(Y_1,\ldots,Y_N)$ and we can define a noncommutative power series
	\begin{align*}
		\xi_j(Y_1,\ldots, Y_N):= (\Xi_j^{-1})^*&(Y_1,\ldots, Y_N)\# Y_j\\
			&- m\circ(1\otimes\tau_Q\otimes 1)\circ(1\otimes \partial_j^{(Q)}+\partial_j^{(Q)}\otimes 1)((\Xi_j^{-1})^*(Y_1,\ldots, Y_N))\in \ax,
	\end{align*}
where $(a\otimes b^\circ)\# x = axb$ and $m(a\otimes b^\circ)=ab$. Then by (\ref{adj}) we have
	\begin{align*}
		\xi_j:=\xi_j(X_1,\ldots, X_N) = \partial_j^{(Q)*}((\Xi_j^{-1})^*).
	\end{align*}
Consequently for $P\in \cz\lge X_1,\ldots, X_N\rge$ we have
	\begin{align*}
		\lge \xi_j, P\rge_{\tau_Q} = \lge \Xi_j^{-1}, \partial_j^{(Q)}(P)\rge_{HS}= \lge 1\otimes 1^\circ, \partial_j(P)\rge_{\tau_Q\otimes\tau_Q^{op}};
	\end{align*}
that is, $\xi_j$ is a conjugate variable.

Let $\nx$ be the number operator acting on $\cz\lge Y_1,\ldots,Y_N\rge$; that is, $\nx$ is defined by $\nx P = d P$ for any monomial $P$ of degree $d$. Let $\Sigma$ denote the inverse of $\nx$ restricted to polynomials with no degree zero term. Define
	\begin{align*}
		V(Y_1,\ldots, Y_N)=\Sigma\left( \frac{1}{2}\sum_{i=1}^N \xi_i(Y_1,\ldots, Y_N) Y_i + Y_i \xi_i(Y_1,\ldots, Y_N)\right).
	\end{align*}
Then by precisely the same arguments as in Step 4 of the proof of Theorem 34 in \cite{Dab}, one can see that $D_iV(Y_1,\ldots, Y_N)=\xi_i(Y_1,\ldots, Y_N)$. Indeed, thanks to Proposition \ref{deriv} and part (i) above, Lemma 36 in \cite{Dab} can be verified using Lemma 12 in \cite{Dab} in our setting. The rest argument of Step 4 is algebraic, and does not use our particular inner product of $\fx_Q$.

Finally, Lemma \ref{power} implies that $\Xi_j^{-1}(Y_1,\ldots, Y_N)$ converges to $1\otimes 1^{\circ}$ with respect to the $R$-norm as $q\to 0$. By an argument similar to that of Lemma 4.3 in \cite{Ne15}, it is easy to see that this implies $\lim_{q\to 0} \|\xi_j(Y_1,\ldots, Y_N) - Y_j\|_R =0$.
\end{proof}

\begin{proof}[Proof of Theorem \ref{main}]
This follows from Proposition \ref{key} and the free monotone transport result of Guionnet and Shlyakhtenko \cite{GS14}*{Corollary 4.3}.
\end{proof}

\section*{Acknowledgements}
B.N. would like to thank Dimitri Shlyakhtenko for his comments about the paper, and is grateful for the support from the UCLA Dissertation Year Fellowship and the NSF Mathematical Sciences Postdoctoral Research Fellowship. Q.Z. would like to thank Michael Brannan, Alice Guionnet, and Marius Junge for helpful conversations. He also thanks the financial support from Prof. Horng-Tzer Yau and the Center of Mathematical Sciences and Applications at Harvard University. Both authors would like to thank NCGOA 2015 for providing the occasion for them to collaborate.

\bibliographystyle{alpha}
\bibliography{qisom}
\end{document}